\theoremstyle{plain}
\newtheorem{theorem}{Theorem}[section]
\newtheorem{proposition}[theorem]{Proposition}
\newtheorem{lemma}[theorem]{Lemma}
\theoremstyle{definition}
\theoremstyle{remark}
\newcommand{\I}{\mathcal{I}}
\begin{document}

\title{The $(p,q)$ property in families of $d$-intervals and $d$-trees}

\author{Shira Zerbib}
\address{Department of Mathematics,
University of Michigan, Ann Arbor} \email{zerbib@umich.edu}


\begin{abstract}
Given integers $p\ge q>1$, a family of sets satisfies the $(p,q)$ property if among any $p$ members of it some $q$ intersect. We prove that for any fixed integer constants $p\ge q>1$, a family of $d$-intervals satisfying the $(p,q)$ property can be pierced by $O(d^{\frac{q}{q-1}})$ points, with constants depending only on $p$ and $q$. This extends results of Tardos, Kaiser and Alon for the case $q=2$, and of Kaiser and Rabinovich for the case $p=q=\lceil log_2(d+2) \rceil$. We further show that similar bounds hold in families of subgraphs of a tree or a graph of bounded tree-width, each consisting of at most $d$ connected components, extending results of Alon for the case $q=2$. Finally, we prove an upper bound of $O(d^{\frac{1}{p-1}})$ on the fractional piercing number in families of $d$-intervals satisfying the $(p,p)$ property, and show that this bound is asymptotically sharp.
\end{abstract}

\maketitle


\section{Introduction}
A {\em homogenous $d$-interval} is a union of $d$ disjoint closed intervals on the real line. A {\em separated $d$-interval} is a union of $d$
non-empty closed intervals, one on each of $d$ fixed pairwise disjoint segments on the real line. It is easy to see that a family of separated $d$-intervals is, in particular, a family of homogeneous $d$-intervals. Thus, when discussing families of homogeneous $d$-intervals, we will sometimes omit the word `homogeneous'.  

Given a finite family of (homogeneous,  separated) $d$-intervals, we are interested in properties of the hypergraph $H$ whose vertex set
is the real line, and whose edges correspond to the $d$-intervals in the family, viewed as sets of points. We call $H$ a \emph{hypergraph of (separated, homogeneous) $d$-intervals}.  

A {\em matching} in a hypergraph $H$ with vertex set $V$ and
edge set $E$ is a subset of disjoint edges in $E$. A {\em cover} is a subset of
$V$ intersecting all edges. The \emph{matching number} $\nu(H)$ is the maximal size of a matching in $H$, and
the \emph{covering number} $\tau(H)$ is the minimal size of a
cover in $H$. The fractional relaxations of $\nu$ and $\tau$ are
denoted, as usual, by $\nu^*$ and $\tau^*$, respectively. By linear programming duality, $\nu\le \nu^*=\tau^*\le \tau$.

An old result of Gallai is that if $H$ is a hypergraph of intervals, then $\tau(H)=\nu(H)$. Tardos \cite{tardos} and Kaiser \cite{kaiser} used topological methods to prove that a hypergraph $H$ of homogenous $d$-intervals has $\frac{\tau(H)}{\nu(H)} \leq d^2-d+1$, and this bound improves to $\frac{\tau(H)}{\nu(H)} \leq d^2-d$ in the separated case. Another approach was taken by Alon \cite{alon} who proved a slightly weaker
upper bound in the homogenous case, namely $\frac{\tau}{\nu} \leq
2d^2$,
by breaking his bound into two results: $\tau^* \le 2d\nu$,
and $\tau \le d\tau^*$. In \cite{AKZ} examples for the sharpness of these two fractional bounds as well as an alternative proof of the latter are given.
Matou\v{s}ek \cite{matousek} showed that the quadratic bound in $d$ on the ratio $\tau/\nu$ in hypergraphs of $d$-intervals is not far from
being optimal: there are examples of intersecting hypergraphs of
$d$-intervals in which $\tau=\frac\tau\nu = \Omega(\frac{d^2}{\log
  d})$.

A hypergraph $H$ is said to satisfy the {\em $(p,q)$ property} if among every $p$ edges in $H$ some $q$ have a non-empty intersection.
In this terminology Gallai's theorem asserts that if an interval hypergraph $H$ satisfies the $(p,2)$ property for some fixed integer constant $p$, then $\tau(H)=p-1$. This result generalizes to all pairs of integer constants $p\ge q>1$: by a theorem of Hadwiger and Debrunner from 1957 \cite{HD}, if an interval hypergraph $H$ satisfies the $(p,q)$ property, then $\tau(H) \le p-q+1$. 

Tardos' and Kaiser's results are that in any hypergraph of $d$-intervals $H$ satisfying the $(p,2)$ property for some fixed integer constant $p$, $\tau(H)\le (p-1)(d^2-d+1).$
Kaiser and Rabinovich \cite{KR} proved that if a hypergraph $H$ of separated $d$-intervals satisfies the $(p,p)$ property for $p=\lceil \log_2(d + 2) \rceil$, then $\tau(H) \le d$. Their proof does not apply in the non-separated case. 
Bj\"orner, Matou\v{s}ek and Ziegler \cite{BMZ} then raised the question whether the $\lceil \log_2(d + 2) \rceil$ factor can be replaced by some constant bound on $p$ independent of $d$. Matou\v{s}ek's example shows that such a constant must be greater than $2$. For a survey on the $(p,q)$ property in families of convex sets and $d$-intervals see \cite{Eckhoff}.

Viewed as a discrete object, a family of $d$-intervals is a collection of subgraphs of a path $G$, each consisting of at most $d$ connected components. 
Alon \cite{alon1} extended this setting to collections of subgraphs of a tree $G$. Let $G$ be a connected graph and let $H=H(G)$ be a collection of subgraphs of $G$, each consisting of at most $d$ connected components. If $G$ is a tree, we call $H(G)$ a {\em hypergraph of $d$-trees}. As before, $H(G)$ satisfies the {\em $(p,q)$ property} if in every $p$ edges of it there exists some $q$ edges sharing a common vertex. 
Alon proved that if a hypergraph of $d$-trees $H$ satisfies the $(p,2)$ property for some fixed integer $p>1$, then $\tau(H)\le 2(p-1)d^2$. In the case $G$ is an arbitrary graph of bounded tree-width $k$ (see definition in Section \ref{sec:dtw}) satisfying the $(p,2)$ property, 
he showed $\tau(H(G))\le 2(k+1)(p-1)d^2$.

In this paper we extend the above mentioned results, by proving upper bounds on the piercing numbers in families of $d$-intervals or $d$-trees satisfying the $(p,q)$ property, for any two fixed integer constants $p\ge q >1$. 
In all our results we are interested in the asymptotic order of the piercing numbers as $d$ goes to infinity, and make no attempt to optimize the involved constants depending on fixed $p,q$.

For families of separated $d$-intervals we prove the following.

\begin{theorem}\label{dpps}
If a hypergraph $H$ of separated $d$-intervals satisfies the $(p,p)$ property for some fixed integer constant $p>1$, then $\tau(H) \le d^{\frac{p}{p-1}}$.
\end{theorem}

The proof of Theorem \ref{dpps} uses
a topological theorem due to Komiya \cite{komiya} together with a counting argument generalizing \cite{KR}. In the case $p=\lceil \log_2(d + 2) \rceil$ it produces $\tau(H) \le d$, as is obtained in \cite{KR}.

For families of homogenous $d$-intervals we have: 

\begin{theorem}\label{dpp}
Let $H$ be a hypergraph of homogeneous $d$-intervals satisfying the $(p,p)$ property, for some fixed integer constant $p>1$. Then  
$\tau^*(H) < p^{\frac{1}{p-1}}d^{\frac{1}{p-1}}+1 $, $\tau(H) < p^{\frac{1}{p-1}}d^{\frac{p}{p-1}}+d $, and the bound on $\tau^*(H)$ is asymptotically sharp: for every integer $p>1$, there exists a hypergraph of homogeneous $d$-intervals $H$ satisfying the $(p,p)$ property with $\tau^*(H) = \Omega(d^{\frac{1}{p-1}})$.
\end{theorem}  

\begin{theorem}\label{dpq}
If a hypergraph $H$ of homogeneous $d$-intervals satisfies the $(p,q)$ property for some fixed integer constants $p\ge q>1$, then $$\tau(H)\le \max\Big\{\frac{2^{\frac{1}{q-1}}(ep)^\frac{q}{q-1}}{q} d^{\frac{q}{q-1}} + d,~ 2p^2d\Big\}.$$
\end{theorem}

Our next two theorems extend the above to families of $d$-trees. Although Theorems \ref{dpp} and \ref{dpq}  are special cases, their proofs are simpler and have geometric nature, and thus we consider them separately.

For families of $d$-trees we prove:

\begin{theorem}\label{treedpp}
Let $G$ be a tree, and let $H=H(G)$ be a hypergraph of $d$-trees satisfying the $(p,p)$ property for some fixed integer constant $p>1$. Then $\tau(H)< p^{\frac{1}{p-1}}d^{\frac{p}{p-1}} + d.$
\end{theorem}

\begin{theorem}\label{treedpq}
Let $G$ be a tree, and let $H=H(G)$ be a hypergraph of $d$-trees satisfying the $(p,q)$ property for some fixed integer constants $p\ge q>1$. Then $$\tau(H)\le \max\Big\{\frac{2^{\frac{1}{q-1}}(ep)^\frac{q}{q-1}}{q} d^{\frac{q}{q-1}} + d,~ 2p^2d\Big\}.$$
\end{theorem}

The 
proofs of Theorems \ref{dpps}, \ref{dpp} and \ref{dpq} are given in Sections \ref{sec:dpps}, \ref{sec:dpp} and \ref{sec:dpq}, respectively.
In Section \ref{sec:lemma} we prove a lemma concerning collections of subtrees of a tree $G$, 
and the proofs of Theorems \ref{treedpp} and \ref{treedpq} are given in Section \ref{sec:dtree}. In section \ref{sec:dtw} we further extend our results to collections of subgraphs of an arbitrary graph $G$ of bounded tree-width. 

\begin{theorem}\label{tw}
Let $G$ be a graph of tree-width at most $k$, and let $H$ be a collection of subgraphs of $G$, each consisting of at most $d$ connected components. If $H$ satisfies the $(p,q)$ property for some fixed integer constants $p\ge q>1$, then $$\tau(H)\le (k+1)\cdot\max\Big\{\frac{2^{\frac{1}{q-1}}(ep)^\frac{q}{q-1}}{q} d^{\frac{q}{q-1}} + d,~ 2p^2d\Big\}.$$
\end{theorem}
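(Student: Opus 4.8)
The plan is to deduce Theorem~\ref{tw} from Theorem~\ref{treedpq} by transporting $H$ to a tree via a tree decomposition, at the cost of a factor $k+1$. Recall that $G$ having tree-width at most $k$ means there is a tree $T$ and an assignment of \emph{bags} $B_t\subseteq V$ to the vertices $t$ of $T$, each bag of size at most $k+1$, such that: (i) every vertex of $G$ lies in some bag; (ii) the two endpoints of every edge of $G$ lie in a common bag; and (iii) for every $v\in V$ the set $T_v:=\{t\in V(T):v\in B_t\}$ induces a nonempty subtree of $T$. Fix such a decomposition.

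First I would recall the standard consequence of (ii) and (iii): if $S$ is a \emph{connected} subgraph of $G$, then $\widehat S:=\{t\in V(T):B_t\cap V(S)\neq\emptyset\}=\bigcup_{v\in V(S)}T_v$ induces a subtree of $T$. Indeed each $T_v$ is a subtree, and for every edge $uv$ of $S$ the subtrees $T_u,T_v$ meet in a bag containing both $u$ and $v$ by (ii); walking along paths in $S$ then shows that any two of the $T_v$ lie in the same component of $\widehat S$, so $\widehat S$ is connected. Now, for an arbitrary $S\in H$ with connected components $S_1,\dots,S_m$, $m\le d$, put $\widehat S:=\bigcup_{i=1}^m\widehat{S_i}$; this is a subgraph of $T$ with at most $d$ connected components. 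Let $\widehat H:=\{\widehat S:S\in H\}$.

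The key point is that $\widehat H$ satisfies the $(p,q)$ property in $T$: if $S^{(1)},\dots,S^{(q)}\in H$ share a common vertex $v\in V$, then choosing any $t$ with $v\in B_t$ gives $t\in\widehat{S^{(1)}}\cap\cdots\cap\widehat{S^{(q)}}$; hence among any $p$ members of $\widehat H$ some $q$ share a vertex of $T$. By Theorem~\ref{treedpq} applied to $\widehat H$ as a collection of subgraphs of the tree $T$, there is a set $P\subseteq V(T)$ with $|P|\le M$ meeting every member of $\widehat H$, where $M:=\max\{\frac{2^{1/(q-1)}(ep)^{q/(q-1)}}{q}d^{q/(q-1)}+d,\ 2p^2d\}$.

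Finally I would lift $P$ back to $G$ by setting $Q:=\bigcup_{t\in P}B_t\subseteq V$, so that $|Q|\le(k+1)|P|\le(k+1)M$. For every $S\in H$ the set $P$ meets $\widehat S$, i.e.\ some $t\in P$ has $B_t\cap V(S)\neq\emptyset$, so $Q$ contains a vertex of $S$; thus $Q$ is a cover of $H$ and $\tau(H)\le(k+1)M$, which is the claimed bound. There is no real obstacle here: the one thing to verify carefully is that a member of $H$ with at most $d$ components is sent by $S\mapsto\widehat S$ to a subgraph of $T$ with at most $d$ components (the content of the standard fact recalled above) and that the $(p,q)$ property is preserved; the rest is a routine blow-up by the maximum bag size. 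The same reduction together with Theorem~\ref{treedpp} would likewise yield the corresponding refinement in the $(p,p)$ case.
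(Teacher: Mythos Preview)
Your proof is correct and follows essentially the same approach as the paper: pass from $G$ to a tree decomposition, replace each $S\in H$ by the induced subgraph of $T$ on the bags meeting $S$, apply Theorem~\ref{treedpq} there, and blow the resulting cover back up to $G$ at a cost of $k+1$ per bag. In fact you justify more carefully than the paper does the two points it asserts without argument, namely that each $\widehat S$ has at most $d$ components and that the $(p,q)$ property is preserved.
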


In our proofs we apply some of the methods used in \cite{alon, alon1, alon2, AKZ} together with additional counting arguments. In most cases we first prove that the fractional covering number $\tau^*(H)$ cannot exceed a certain bound, and then use Alon's upper bound on the ratio $\tau(H)/\tau^*(H)$:

\begin{theorem}[Alon, \cite{alon, alon1}]\label{alon}
Let $G$ be a tree, and let $H=H(G)$ be a hypergraph of $d$-trees. Then $\tau(H) \le d\tau^*(H)$. In particular, if $H$ is a hypergraph of $d$-intervals then $\tau(H) \le d\tau^*(H)$.
\end{theorem}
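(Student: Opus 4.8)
The plan is to pass from the given multi-component family $H$ to an auxiliary family $H'$ consisting of honest subtrees of $G$, on which the fractional and integral covering numbers coincide, while inflating the fractional weight only by a factor of $d$. First I would fix an optimal fractional cover, that is, a function $f\colon V\to[0,1]$ with $\sum_{v\in e}f(v)\ge 1$ for every edge $e\in H$ and $\sum_{v}f(v)=\tau^*(H)$. Each $e\in H$ is a disjoint union $e=e_1\cup\cdots\cup e_d$ of at most $d$ connected components, each a subtree of $G$, and these partition the vertices of $e$; hence $\sum_{i}\sum_{v\in e_i}f(v)\ge 1$, so by pigeonhole some component $e_{i(e)}$ carries weight $\sum_{v\in e_{i(e)}}f(v)\ge 1/d$. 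I would call this the heavy component of $e$ and set $H'=\{e_{i(e)}:e\in H\}$, a family of single subtrees of $G$ with $e_{i(e)}\subseteq e$.

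Next I would rescale, putting $g=d\,f$. For every heavy component this gives $\sum_{v\in e_{i(e)}}g(v)\ge d\cdot\frac1d=1$, so $g$ is a feasible fractional cover of $H'$, and therefore $\tau^*(H')\le\sum_{v}g(v)=d\,\tau^*(H)$. At this point all the $d$-dependence has been absorbed into the auxiliary single-subtree family, and what remains is purely a statement about subtrees of a tree.

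The crux, and the step I expect to be the only real obstacle, is the integrality claim $\tau(H')=\tau^*(H')$ for a family of subtrees of a tree. This is precisely the $d=1$ case of the theorem and is classical: it is the tree analogue of Gallai's theorem for intervals, and it follows from the Helly property of subtrees of a tree (pairwise intersecting subtrees share a vertex) together with a greedy leaf-to-root peeling argument, or equivalently from the fact that the subtree--vertex incidence hypergraph is balanced and hence satisfies $\tau=\nu$ (and since $\nu\le\nu^*=\tau^*\le\tau$, all four parameters coincide). I would record this as a short self-contained lemma, rooting $G$ at an arbitrary vertex and repeatedly selecting, among the as-yet-uncovered subtrees, one whose highest vertex is deepest, then placing a cover vertex there; the Helly property guarantees that this single vertex simultaneously hits every uncovered subtree passing through it, which drives the greedy bound down to $\nu(H')$.

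Combining the two pieces yields $\tau(H')=\tau^*(H')\le d\,\tau^*(H)$. Finally, since each $e_{i(e)}\subseteq e$, any vertex set meeting all edges of $H'$ automatically meets all edges of $H$; thus a minimum cover of $H'$ is a cover of $H$, and $\tau(H)\le\tau(H')\le d\,\tau^*(H)$, as required. The $d$-interval statement is the special case in which $G$ is a path, since the subtrees of a path are exactly the (sub)intervals. Everything outside the integrality lemma is pigeonhole, rescaling, and the monotonicity $e_{i(e)}\subseteq e$, so the entire weight of the argument rests on the classical tree-Helly fact invoked in the third paragraph.
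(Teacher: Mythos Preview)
The paper does not prove this theorem; it is quoted as a result of Alon (from \cite{alon,alon1}) and used as a black box, so there is no in-paper proof to compare against. Your argument is correct and is in fact essentially Alon's original proof: take an optimal fractional cover, use pigeonhole to select from each $e\in H$ a single ``heavy'' component carrying weight at least $1/d$, rescale by $d$ to get a fractional cover of the resulting family $H'$ of genuine subtrees, and then invoke the integrality $\tau(H')=\tau^*(H')$ for subtree families (equivalently, the tree-Helly/greedy fact $\tau=\nu$). Since a cover of $H'$ is automatically a cover of $H$, the bound follows.

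One small remark on your sketch of the integrality lemma: your greedy description is right, but the sentence ``the Helly property guarantees that this single vertex simultaneously hits every uncovered subtree passing through it'' is phrased a bit loosely. What you actually need (and what your rooting argument gives) is that if $T$ has the deepest top vertex $v_T$, then every subtree that \emph{intersects $T$} already contains $v_T$; this is what lets you add $T$ to a matching and $v_T$ to a cover in lockstep, yielding $\tau(H')=\nu(H')$ and hence $\tau(H')=\tau^*(H')$. This is exactly the mechanism behind Lemma~\ref{lemma} in the paper, so your proof dovetails with the paper's toolkit even though the paper itself does not spell out a proof of Theorem~\ref{alon}.
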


\section{The $(p,p)$ property in families of separated $d$-intervals}\label{sec:dpps}

For a positive integer $d$, we denote by $[d]$ the set of integers $\{1,\dots,d\}$. 
A $d$-uniform hypergraph $D=(V,E)$ is {\em $d$-partite} if its vertex set $V$ is a disjoint union of $d$ sets $V=V_i \cup\dots \cup V_d$, and every edge $e\in E$ has $|e\cap V_i|=1$ for all $i\in [d]$. 
For every subset $S\subset V$, denote by $\chi_S \in \{0,1\}^{V}$ the characteristic vector of $S$, namely, $\chi_S(v)$ is $1$ if $v\in S$, and $0$ otherwise.  
A hypergraph $D=(V,E)$ is said to be {\em balanced} if there exist positive weights $\alpha_e$, $e\in E$, such that $\sum_{e\in E} \alpha_e \chi_e = \chi_V$. 

For the proof of Theorem \ref{dpps} we will need the following lemma,  generalizing a lemma proved in \cite{KR}.   
\begin{lemma}\label{lemmapp}
Let $D=(V,E)$ be a balanced $d$-partite hypergraph on vertex set $V=V_i \cup\dots \cup V_d$, where $|V_i|=t$ for all $i$.  
If every $\ell+1$ edges of $H$ have at least $m$ vertices in common, then every $\ell$ edges of $H$ have at least $tm+1$ vertices in common.
\end{lemma}
\begin{proof}
Assume to the contrary that there exists a set $A$ of $\ell$ edges in $E$ such that $|\bigcap A| \le tm$. Write $C=\bigcap A$, and note that $|C\cap V_i|\le 1$ for every $i\in [d]$. Let $I\subset [d]$ be the set of indices $i$ such that $|C\cap V_i| =1$. Write $\bar{C} = \big(\bigcup_{i\in I} V_i\big) \setminus C$. Since  $|V_i|=t$ for all $i$, we have $|\bar{C}|=(t-1)|C|$. 

By the conditions of the lemma,  
for every edge $e \notin A$, the set $A\cup \{e\}$ have at least $m$ vertices in common, and these vertices must be in $C$. Since $D$ is balanced this implies   
$$|C| \ge |C|\sum_{e \in A} \alpha_e + m\sum_{e \notin A} \alpha_e > m\sum_{e \notin A} \alpha_e. $$ Therefore, from $|C| \le tm$ we get $$|\bar{C}| \le (|C|-m)\sum_{e \notin A} \alpha_e \le (t-1)m\sum_{e \notin A} \alpha_e < (t-1)|C|,$$ a contradiction.
\end{proof}

We will further need a topological theorem due to Komiya \cite{komiya}. The application of Komiya's theorem in the proof of Theorem \ref{dpps} will be similar to its application in the proof of Theorem 6.3 in \cite{AKZ}.  
\begin{theorem}[Komiya, \cite{komiya}] \label{komiya}
Let $P$ be a polytope, and for 
every face $F$ of $P$ choose a point  $q(F)\in F$ and an open set $B_F \subseteq P$. If $G\subseteq \bigcup_{F \subseteq G} B_F$
for every face $G$ of $P$, 
then there exists a collection $Q$ of faces of $P$ satisfying $q(P) \in
\text{conv}\{q(F) \mid F \in Q\}$ and $\bigcap_{F\in Q} B_{F}\neq
\emptyset$.
\end{theorem}

We are now ready to prove Theorem \ref{dpps}. 
\medskip

\noindent {\em Proof of Theorem \ref{dpps}.}
Since $H$ is finite, we may
assume that the vertex set of $H$ is the union of $d$ disjoint copies of open unit segment $(0,1)$. Write $k= \lfloor d^{\frac{1}{p-1}}\rfloor$.
We apply Komiya's theorem to the polytope
$P=\Delta_k \times \Delta_k \times \ldots \times \Delta_k$, the
$d$-fold product of the $k$-dimensional simplex $\Delta_k$ by
itself. A point $\vec{x} \in P$ has the form:
$$\vec{x}=((x^1_1, x^1_2, \ldots ,x^1_{k+1}), (x^2_1, x^2_2,
\ldots ,x^2_{k+1}), \ldots ,(x^d_1, x^d_2, \ldots ,x^d_{k+1})),$$
where $x^j_i \ge 0$ and $\sum_{i=1}^{k+1} x^j_i =1$ for every $1\le j\le d$.  
For $\vec{x} \in P$ and $(i,j)\in [k+1]\times[d]$ let
$p_{\vec{x}}(i,j)=\sum_{t \le i}x_t^j$. 
A $0$-dimensional face of $P$ is a point $\vec{y}\in P$ consisting of only $0$ and $1$ components. 

For the application of Theorem \ref{komiya}, define sets $B_F$ and points $q(F)$ for every face $F$ of $P$ as follows.  
For a 0-dimensional face $\vec{y}\in P$ let  $B_{\vec{y}}$ be the set
of all points $\vec{x} \in P$ for which there exists a separated $d$-interval $h \in H$
satisfying $h^j \subseteq (p_{\vec{x}}(i^j(\vec{y})-1,j), p_{\vec{x}}(i^j(\vec{y}),j))$ for all $j$, where $h^j$ is the $j$-th interval component of  $h$, and $i^j(\vec{y})\in [k+1]$ is the unique index $i$ such that $y_{i}^j=1$. For all other faces $F$ of $P$ let 
$B_F=\emptyset$. Since every $d$-interval in $H$ consists of closed interval components, the sets $B_F$ are open.  
For every face $F$ of $P$ choose the point $q(F)$ to be the barycenter of $F$.

Assume to the contrary that $\tau(H)>d^{\frac{p}{p-1}}$. Then for every $\vec{x}\in P$, the set of points
$\{p_{\vec{x}}(i,j) \mid 1 \le i \le k, 1 \le j \le d\} \subset V(H)$ is not a cover for
$H$. Hence $\bigcup B_{\vec{y}}=P$, where the union is over all
0-dimensional faces $\vec{y}$ of $P$. Moreover, for every face $F = \text{conv}(T)$, where $T$ is a set of
0-dimensional faces in $P$, we have 
 $F \subseteq \bigcup \{B_{\vec{y}} \mid \vec{y} \in T\}$, because an empty segment cannot contain a non-empty interval component of a $d$-interval in $H$. Thus by Theorem \ref{komiya}, there exists a set $Q$ of 0-dimensional faces of $P$, such that $q(P) \in \text{conv}\{q(\vec{y}) \mid \vec{y} \in Q\}$ and $\bigcap_{\vec{y} \in Q}B_{\vec{y}} \neq \emptyset$.

Let $D$ be the $d$-partite hypergraph on vertex set $V=\bigcup_{j=1}^{d} V^j$, where each $V^j$ is a distinct copy of $[k+1]$, and with edge set $E=\{e_{\vec{y}}\mid \vec{y} \in Q\}$, where $e_{\vec{y}}$ is the subset of $V$ satisfying $\chi_{e_{\vec{y}}}= \vec{y}$. 
Then the fact that $q(F) \in \text{conv}\{q(\vec{y}) \mid \vec{y} \in Q\}$
 means that $D$ is balanced. Moreover, since $H$ satisfies the $(p,p)$ property, $D$ must satisfy it too, that is, every $p$ edges of $D$ intersect at (at least) one vertex.
Applying Lemma \ref{lemmapp} $p-1$ times, we obtain that every edge of $D$ intersects itself at more than $(k+1)^{p-1} > d^{\frac{p-1}{p-1}}=d$ vertices. But this contradicts the fact that every edge of $D$ contains exactly $d$ vertices.  
\qed

\section{The $(p,p)$ property in families of homogeneous $d$-intervals}\label{sec:dpp}

Let $H$ be a hypergraph of homogeneous $d$-intervals. By performing small perturbations on the $d$-intervals of $H$,
we may assume that no two endpoints of intervals in edges of $H$ coincide.
We begin this section by proving a simple lemma which will play a crucial role in the sequel.

\begin{lemma}\label{firstlemma}
If a finite family $\I$ of $k$ closed intervals on a line intersects, then there exist at least two pairs $(x,\I\setminus \{I\})$, where $x$ is
an endpoint of the interval $I \in \I$  and $x$ lies in every interval $J\in \I$.
\end{lemma}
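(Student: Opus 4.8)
Since $\I$ is a finite family of closed intervals with nonempty common intersection, the intersection $\bigcap_{J\in\I} J$ is itself a nonempty closed interval, say $[a,b]$ with $a\le b$. The left endpoint $a$ is the maximum of the left endpoints of the intervals in $\I$, and the right endpoint $b$ is the minimum of the right endpoints. The plan is to exhibit the two required pairs by taking $x=a$ and $x=b$. Indeed, since $a=\max_{J\in\I}(\text{left endpoint of }J)$, there is some $I_1\in\I$ whose left endpoint is exactly $a$; and since $b=\min_{J\in\I}(\text{right endpoint of }J)$, there is some $I_2\in\I$ whose right endpoint is exactly $b$. In both cases the chosen point lies in $[a,b]$ and hence in every $J\in\I$, so $(a,\I\setminus\{I_1\})$ and $(b,\I\setminus\{I_2\})$ are pairs of the desired form.

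The only thing left is to check that these two pairs are genuinely distinct, i.e.\ that we are not forced to use $I_1=I_2$ together with $a=b$. Here I would invoke the perturbation assumption made just before the lemma: no two endpoints of intervals appearing in edges of $H$ coincide. In particular $a\ne b$ unless $\I$ consists of a single degenerate interval, which the nondegeneracy/perturbation convention rules out; and even if $a$ and $b$ were endpoints of the same interval $I$, the two pairs $(a,\I\setminus\{I\})$ and $(b,\I\setminus\{I\})$ differ in their first coordinate. So the two pairs are distinct, as claimed.

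I expect the only subtle point to be exactly this distinctness clause — making sure the lemma is not vacuous in a one-interval family or when the common intersection degenerates to a point. All of that is handled cleanly by the standing assumption that endpoints are in general position, so no real obstacle remains; the bulk of the argument is just the elementary observation that the extreme endpoints realizing $\max$ of left endpoints and $\min$ of right endpoints lie in the common intersection.
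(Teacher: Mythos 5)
Your argument is essentially identical to the paper's proof: both identify the common intersection $\bigcap\I$ as a nonempty closed interval whose two endpoints are endpoints of members $I_1,I_2\in\I$, take the two pairs $(x_1,\I\setminus\{I_1\})$ and $(x_2,\I\setminus\{I_2\})$, and use the standing general-position (perturbation) assumption to guarantee the two endpoints are distinct. Your extra care about the degenerate and single-interval cases is fine and matches the paper's implicit reliance on that same assumption.
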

\begin{proof}
Let $K = \bigcap \I$. Then $K$ is a non-empty closed interval $[x_1,x_2]$, where $x_1, x_2$ are endpoints of (not necessarily distinct) intervals $I_1,I_2 \in \I$, respectively, and in particular, $x_1\neq x_2$. Thus $(x_1,\I\setminus \{I_1\})$ and $(x_2,\I\setminus \{I_2\})$ are the two pairs in the lemma.
\end{proof}

{\em Proof of Theorem \ref{dpp}}.
Let $f:H \to \mathbb{Q}^+$ be a maximal fractional matching. 
Let $r$ be a common
denominator for the values of $f$. By duplicating edges and removing edges $h$ with
$f(h) = 0$, we obtain a hypergraph $H'$ with the same relevant parameters as $H$, and
with maximal degree $r$.
Thus 
without loss of generality we may assume that $f(h)=\frac{1}{r}$ for all $h
\in H$, where $r$ is the maximal size of a subset of edges in $H$ intersecting in a single point. Note that the removal and duplication of edges does not invalidate the $(p,p)$ property of $H$. In particular, we have $r\ge p$. Letting $n$ be the number of edges in $H$ we have by LP duality that $\tau^*(H)=\nu^*(H)=\frac{n}{r}$, and thus we need to show that
$\frac{n}{r} < p^{\frac{1}{p-1}}d^{\frac{1}{p-1}} + 1$.

Since $H$ satisfies the $(p,p)$ property, it contains $\binom{n}{p}$ subsets of $p$ edges, each of which intersects at a vertex of $H$. By Lemma \ref{firstlemma}, each such subset contributes at least two different pairs $(x,\{h_1,\dots,h_{p-1}\})$, where $x$ is
an endpoint of an edge $h$ of $H$, the edges $h_1,\dots,h_{p-1}$ of $H$ are distinct and all different from $h$, and $x$ lies in every $h_i$, $1\le i\le p-1$.

Since there are altogether at most $2dn$ possible choices for $x$, there is such a point $x$ that belongs to at least $$X:=\frac{2}{2dn}\binom{n}{p} = \frac{\prod_{i=1}^{p-1}(n-i)}{d\cdot p!} \ge \frac{(n-p+1)^{p-1}}{d\cdot p!}$$ different pairs $(x,\{h_1,\dots,h_{p-1}\})$. 
If the number of edges $h\in H$ containing $x$ is $Z$,
then, by the above, $\binom{Z}{p-1} \ge  X$, from which it follows that $$ Z\ge \Big((p-1)!\cdot X\Big)^{\frac{1}{p-1}} \ge \frac{n - p+1}{(pd)^{\frac{1}{p-1}}}.$$ Thus, in particular,  $$r \ge \frac{n - p+1}{(pd)^{\frac{1}{p-1}}},$$ implying 
$$\frac{n - p+1}{r(pd)^{\frac{1}{p-1}}}\le 1.$$ 
Since $r\ge p$ this entails $$\tau^*(H)=\frac{n}{r} \le (pd)^{\frac{1}{p-1}} + \frac{p-1}{r}<(pd)^{\frac{1}{p-1}} + 1.$$
 
Now, by Theorem \ref{alon}
we have $\tau(H) \le d\tau^*(H)$, and therefore,
$$\tau(H) < p^{\frac{1}{p-1}}d^{\frac{p}{p-1}} + d.$$

To see that the bound on $\tau^*(H)$ is asymptotically sharp,
let $q$ be a prime power and let $P=\mathbb{P}^k(\mathbb{F}_q)$ denote the $k$-dimensional projective space over the field $\mathbb{F}_q$ of $q$ elements. Let $H=(V,E)$ be the hypergraph with vertex set $V=P$ whose edge set $E$ is the family of $(k-1)$-dimensional projective subspaces of $P$. Clearly, $H$ satisfies the $(k,k)$ property.

\begin{proposition}
$\tau^*(H) = q+\frac{1}{\sum_{i=0}^{k-1}q^{i}}$.
\end{proposition}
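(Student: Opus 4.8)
The plan is to compute $\nu^*(H) = \tau^*(H)$ directly, exploiting the symmetry of the projective space $P = \mathbb{P}^k(\mathbb{F}_q)$. First I would recall the relevant parameters: the number of points of $P$ is $N := \sum_{i=0}^{k} q^i$, the number of hyperplanes (which are the edges of $H$) is also $N$ by projective duality, each hyperplane contains $M := \sum_{i=0}^{k-1} q^i$ points, and each point lies on exactly $M$ hyperplanes (again by duality). Crucially, any two distinct hyperplanes meet in a $(k-2)$-dimensional subspace, so they always intersect; in fact the maximal number of hyperplanes through a single point is exactly $M$, attained by the pencil of all hyperplanes through that point. This last fact tells us that a maximal fractional matching which is constant on all edges must assign weight $1/M$ to each edge, giving the candidate value $\nu^*(H) = N/M$. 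A short calculation gives $N/M = (q^{k+1}-1)/(q^k-1) = q + (q^k - 1)/(q^k-1)\cdot q^{\,?}$ — more precisely $N = qM + 1$, so $N/M = q + 1/M = q + 1/\sum_{i=0}^{k-1} q^i$, which is the claimed value.

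**The fractional matching bound (lower bound on $\tau^*$).**
For the lower bound $\tau^*(H) \ge q + 1/M$ I would exhibit the uniform fractional matching $f \equiv 1/M$ and verify it is feasible: for every vertex $v \in P$, the edges containing $v$ are precisely the $M$ hyperplanes through $v$, so $\sum_{e \ni v} f(e) = M \cdot (1/M) = 1$. Hence $f$ is a legitimate fractional matching of total weight $N/M = q + 1/M$, and therefore $\nu^*(H) \ge q + 1/M$.

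**The fractional cover bound (upper bound on $\tau^*$).**
For the matching upper bound $\tau^*(H) \le q + 1/M$ I would produce a fractional cover $g$ of that weight. The natural guess, by the same symmetry, is the uniform assignment $g \equiv 1/M$ on the vertices: for every edge (hyperplane) $e$, we have $\sum_{v \in e} g(v) = M \cdot (1/M) = 1$ since $|e| = M$, so $g$ is feasible, with total weight $|V|/M = N/M = q + 1/M$. Together with the previous paragraph and LP duality $\nu^* = \tau^*$, this pins down $\tau^*(H) = q + 1/M = q + 1/\sum_{i=0}^{k-1} q^i$ exactly.

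**What is actually delicate.**
The arithmetic is entirely routine once the structural facts are in place, so the only real content — and the step I would be most careful about — is the combinatorial geometry of $\mathbb{P}^k(\mathbb{F}_q)$: that every point lies on exactly $M = \sum_{i=0}^{k-1} q^i$ hyperplanes and every hyperplane contains exactly $M$ points. This is standard (it follows from counting $1$-dimensional and $k$-dimensional subspaces of $\mathbb{F}_q^{k+1}$, or from projective self-duality), but it is the linchpin: it is exactly what makes both the uniform fractional matching and the uniform fractional cover feasible with the same weight, forcing equality. One should also note in passing why this proposition establishes the asymptotic sharpness claimed in Theorem \ref{dpp}: taking $p = k$, the space $P$ has $d := M = \sum_{i=0}^{k-1} q^i$ points on each hyperplane, viewed as a $d$-interval hypergraph via any ordering, and $\tau^*(H) = q + o(1) \approx d^{1/(k-1)} = d^{1/(p-1)}$ as $q \to \infty$, matching the leading term $p^{1/(p-1)} d^{1/(p-1)}$ up to the constant factor $p^{1/(p-1)} \to 1$.
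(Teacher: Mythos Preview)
Your proof is correct and essentially identical to the paper's: both exhibit the uniform fractional matching and the uniform fractional cover with weight $1/M = (q-1)/(q^k-1)$ on each edge (respectively vertex), verify feasibility via the counts $|e|=M$ and $\deg(v)=M$, and conclude by LP duality. The additional remarks you make about projective duality and asymptotic sharpness are accurate elaborations but not part of the proof itself.
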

\begin{proof}
Since $H$ is a $d$-uniform $d$-regular hypergraph with $d=\frac{q^k-1}{q-1}$,  the constant functions $f(e)=\frac{1}{d}$ for every $e\in E$  and  $g(v)=\frac{1}{d}$ for every $v\in V$ form a fractional matching and a fractional cover for $H$, respectively, of size $q+\frac{1}{\sum_{i=0}^{k-1}q^{i}}$ each.
By linear programming duality, the proposition follows. 
\end{proof}

Since $H$ is a $d$-uniform hypergraph for $d=\frac{q^k-1}{q-1} =\sum_{i=0}^{k-1}q^{i}$, it can be realized as a family $F$ of $d$-intervals (where each interval is just a point in $\mathbb{R}$) with $\tau^*(F)= q+\frac{1}{\sum_{i=0}^{k-1}q^{i}} = \Omega(d^{\frac{1}{k-1}})$.
\qed

\section{The $(p,q)$-property in families of homogeneous $d$-intervals}\label{sec:dpq}

\begin{theorem}\label{dpq*}
If a family $H$ of homogeneous $d$-intervals satisfies the $(p,q)$-property for some fixed integer constants $p\ge q>1$, then $$\tau^*(H)\le \max\Big\{\frac{2^{\frac{1}{q-1}}(ep)^\frac{q}{q-1}}{q}\cdot d^{\frac{1}{q-1}} + 1,~ 2p^2\Big\}.$$
\end{theorem}
\begin{proof}
By removing and duplicating edges in $H$ we may assume that $H$ is a multiset $\{h_1,\dots,h_n\}$ in which each $h_i$ has multiplicity $a_i$, with $m = \sum_{i=1}^n a_i$, and that $\tau^*(H) = \nu^*(H)= m/r$, where $r$ is the maximal number of edges in $H$ with a non-empty intersection. Then clearly $r\ge a_i$ for every $1\le i \le n$. Moreover, since a subset of $p$ elements in $H$ in which no two elements are copies of the same edge must contain some $q$ elements that intersect, we have also $r\ge q$. 

For $1\le i \le n$ and $1\le j \le a_i$ Denote by $h_{i,j}$ the $j-$th copy of $h_i$ in $H$.
Let $T$ be the family of all subsets of cardinality $p$ of $H$ of the form $t=\{h_{i_1,j_1},\dots,h_{i_p,j_p}\}$, with $i_u\neq i_v$ for all $1\le u < v \le p$. Since $r\ge a_i$ we have $$|T| \ge \frac{1}{p!}m \big(m-r \big) \big(m-2r \big)\cdots \big(m-(p-1)r\big) \ge \frac{1}{p!}m^p\big(1-\frac{pr}{m}\big)^p.$$

The $(p,q)$-property implies that for every $t\in T$ there exists an intersecting subset $s\subset t$ of cardinality $q$. Moreover, $s$ is contained in at most $\binom{m-q}{p-q}$ members of $T$. It follows that the number of intersecting subsets of size $q$ in $H$ is at least
$$\frac{|T|}{\binom{m-q}{p-q}} \ge \frac{\binom{m}{q}}{\binom{p}{q}}\big(1-\frac{pr}{m}\big)^p.$$

By Lemma \ref{firstlemma}, each such intersecting subset admits at least two pairs $(x,\{f_1,\dots,f_{q-1}\})$, where $x$ is
an endpoint of an edge $f$ of $H$, $f_1,\dots,f_{q-1}$ are $q-1$ distinct edges of $H$, all different from $f$, and $x$ lies in every $f_i$, $1\le i\le q-1$. Since there are altogether $2dm$ endpoints of members in $H$, there must exist an endpoint $x$ that pierces at least $$X:=\frac{\binom{m}{q}}{dm\binom{p}{q}}\big(1-\frac{pr}{m}\big)^p$$ subsets of cardinality $q-1$ of $H$.
If $\frac{m}{r} \ge 2p^2$, this implies that $x$ lies in at least
\begin{equation}\label{equation}
\begin{split}
\Big((q-1)!\cdot X\Big)^{\frac{1}{q-1}} & \ge
 \Big(\frac{(q-1)!\binom{m}{q}}{dm\binom{p}{q}}\big(1-\frac{pr}{m}\big)^p\Big)^{\frac{1}{q-1}} \\
& \ge \frac{m-q+1}{(dq\binom{p}{q})^{\frac{1}{q-1}}}\Big(1-\frac{p^2r}{m}\Big)^{\frac{1}{q-1}} \\
& \ge \frac{m-q+1}{(2dq\binom{p}{q})^{\frac{1}{q-1}}} \ge \frac{q(m-q+1)}{(2e^qp^qd)^{\frac{1}{q-1}}}
\end{split}
\end{equation}
different edges in $H$.
It follows that $$\frac{q(m-q+1)}{2^{\frac{1}{q-1}}(ep)^\frac{q}{q-1}d^{\frac{1}{q-1}}}\le r,$$ which together with $r\ge q$ entails
$$\tau^*(H)=\frac{m}{r} < \frac{2^{\frac{1}{q-1}}(ep)^\frac{q}{q-1}}{q}\cdot d^{\frac{1}{q-1}} + 1,$$ as we wanted to show.
\end{proof}

The proof of Theorem \ref{dpq} now follows by combining the bound in Theorem \ref{dpq*} with the bound $\tau(H) \le d\tau^*(H)$ given in Theorem \ref{alon}.

\section{A lemma on collections of subtrees}\label{sec:lemma}

\begin{lemma}\label{lemma}
Let $H$ be a finite family of $n$ (not necessarily distinct) subtrees of a tree $G$ and suppose that $\binom{H}{p}$ contains at least $k$ intersecting subsets. Then there exists a vertex in $G$ that lies in at least $$\Big((p-1)!~\frac{k}{n}\Big)^{\frac{1}{p-1}}+1$$ members of $H$.
\end{lemma}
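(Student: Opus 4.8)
The plan is to follow the pigeonhole count from the proof of Theorem~\ref{dpp}, the only new ingredient being a replacement for Lemma~\ref{firstlemma}: one needs a bounded ``certificate set'' of vertices of $G$ playing the role of interval endpoints, and I would obtain it by rooting. Fix a root $\rho$ of $G$ (we may assume $G$ is finite, replacing it by the finite subtree spanned by the members of $H$). For a subtree $S$ of $G$ let $\mathrm{top}(S)$ be the vertex of $S$ of smallest depth; this is well defined, since the minimum-depth vertex of a connected subtree is unique (any two such would be each other's least common ancestor). The sets $H_v^{\mathrm{top}}:=\{S\in H:\mathrm{top}(S)=v\}$, over all vertices $v$ of $G$, then partition $H$, so $\sum_v|H_v^{\mathrm{top}}|=n$.

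The crux is the following claim, analogous to Lemma~\ref{firstlemma}: \emph{if $\mathcal{S}\subseteq H$ is intersecting, then $v:=\mathrm{top}\big(\bigcap\mathcal{S}\big)$ lies in every member of $\mathcal{S}$ and equals $\mathrm{top}(S_0)$ for at least one $S_0\in\mathcal{S}$.} Here $\bigcap\mathcal{S}$ is a nonempty subtree (an intersection of subtrees of a tree is connected, since for any two of its vertices the path between them lies in each subtree), so the first assertion is clear. For the second I would argue by contradiction: each $\mathrm{top}(S)$ is an ancestor of $v$ (it and $v$ both lie in the connected set $S$, and $\mathrm{top}(S)$ has minimal depth there), so if no $\mathrm{top}(S)$ equalled $v$ then each would be a \emph{proper} ancestor of $v$; then $v$ has a parent $u$, every path from a proper ancestor of $v$ to $v$ runs through $u$, so $u\in S$ for all $S\in\mathcal{S}$, i.e. $u\in\bigcap\mathcal{S}$, contradicting the minimality of the depth of $v$ in $\bigcap\mathcal{S}$.

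Granting the claim, I would define $\varphi$ sending each intersecting $p$-subset $\mathcal{S}$ of $H$ to $\mathrm{top}\big(\bigcap\mathcal{S}\big)$. For a fixed vertex $v$, every $\mathcal{S}\in\varphi^{-1}(v)$ is a $p$-subset of $H_v:=\{S\in H:v\in S\}$ that meets $H_v^{\mathrm{top}}$; choosing first a member with top $v$ and then the remaining $p-1$ members, $|\varphi^{-1}(v)|\le |H_v^{\mathrm{top}}|\binom{|H_v|-1}{p-1}$. Let $D=\max_v|H_v|$, the largest number of members of $H$ through a common vertex. Summing over $v$ and using $\binom{|H_v|-1}{p-1}\le\binom{D-1}{p-1}\le(D-1)^{p-1}/(p-1)!$ together with $\sum_v|H_v^{\mathrm{top}}|=n$,
\[
k\ \le\ \sum_v|\varphi^{-1}(v)|\ \le\ \frac{(D-1)^{p-1}}{(p-1)!}\sum_v|H_v^{\mathrm{top}}|\ =\ \frac{(D-1)^{p-1}}{(p-1)!}\,n .
\]
Rearranging gives $D\ge\big((p-1)!\,k/n\big)^{1/(p-1)}+1$, and since $D$ is attained at some vertex of $G$, that vertex lies in the required number of members of $H$.

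I expect essentially all the difficulty to be concentrated in the claim of the second paragraph — namely, that the top of the intersection is the top of some member of the family — which is exactly the point where, in Lemma~\ref{firstlemma}, one uses that an endpoint of $\bigcap\mathcal{I}$ is an endpoint of one of the intervals. Everything afterwards is the same convexity-plus-pigeonhole bookkeeping as in Theorem~\ref{dpp}, now with the $n$ tops in place of the $2dn$ endpoints (each intersecting subfamily contributing one certificate instead of two, consistently with each subtree having one top instead of two endpoints).
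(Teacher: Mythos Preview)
Your proof is correct. Both you and the paper root $G$ and use the top (minimum-depth) vertex of a subtree as the certificate, relying on the same geometric fact: in an intersecting family of subtrees, every member contains the top of the deepest one. The counting, however, is organized differently. The paper first greedily prunes $H$ to a subfamily $H'$ in which every element still lies in at least $k/n$ intersecting $p$-subsets of $H'$, then selects the single element $h\in H'$ whose top $v_h$ is deepest; every intersecting $p$-subset of $H'$ containing $h$ has its remaining $p-1$ members pass through $v_h$, so $v_h$ lies in at least $k/n$ distinct $(p-1)$-subsets of $H'\setminus\{h\}$, giving the bound. You instead run a single global double count: you map each intersecting $p$-subset $\mathcal S$ to $\varphi(\mathcal S)=\mathrm{top}\bigl(\bigcap\mathcal S\bigr)$, bound the fibers by $|H_v^{\mathrm{top}}|\binom{|H_v|-1}{p-1}$, and sum using the partition $\sum_v|H_v^{\mathrm{top}}|=n$. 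Your route sidesteps the cleanup step and is a bit slicker; the paper's route produces an explicit witnessing vertex from the outset.
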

\begin{proof}
Let $H'$ be a sub-family of $H$ with the property that every element in $H'$ belongs to at least $k/n$ different intersecting subsets of size $p$ in $H'$. Such a sub-family exists since we can greedily remove from $H$ elements belonging to less than $k/n$ different intersecting subsets, and since in each step less than $k/n$ intersecting subsets of $p$ elements are removed, this process cannot terminate with an empty set.

Choose an arbitrary vertex $u\in V(G)$ and consider $G$ as a tree rooted in $u$. Let $dist(u,v)$ denote the length of the path from $u$ to a vertex $v\in V(G)$. Then every tree $h\in H'$ is rooted at a vertex $v_h\in V(h)$ with the property that $dist(u,v_h) = \min\{dist(u,x) \mid x\in V(h)\}$.
Let
$h\in H'$ be such that $dist(u,v_h) = \max\{dist(u,v_f) \mid f\in H'\}$. Then if $h$ belongs to an intersecting subset $T$ of size $p$ of $H'$, then every one of the $p-1$ members of $T$ that are different from $h$ must contain $v_h$. Since $h$ belongs to at least $k/n$ different such subsets $T$, we conclude that $v_h$ belongs to at least $k/n$ different subsets of $p-1$ elements in $H'$, all of which do not contain $h$. It follows that $v_h$ belongs to at least $$\Big((p-1)!~ \frac{k}{n}\Big)^{\frac{1}{p-1}}$$ members of $H$ all different from $h$, which proves the lemma.
\end{proof}

\section{The $(p,p)$ and $(p,q)$ properties in families of $d$-trees}\label{sec:dtree}

As before, we first prove bounds on the fractional piercing numbers. 
\begin{theorem}\label{mainpp}
Let $G$ be a tree, and let $H=H(G)$ be a hypergraph of $d$-trees satisfying the $(p,p)$ property for some fixed integer constant $p>1$, then $$\tau^*(H)<(pd)^{\frac{1}{p-1}} + 1.$$
\end{theorem}
\begin{proof}
Let $f:H \to \mathbb{Q}^+$ be a maximal fractional matching. As in the proof of Theorem \ref{dpp}, by removing 
and duplicating edges if necessary,
one may assume without loss of generality that $f(h)=\frac{1}{r}$ for all $h \in H$, where $r$ is the maximal size of an intersecting subset of $H$. In particular, $r\ge p$. Letting $n = |H|$ we have by LP duality $\tau^*(H)=\nu^*(H)=\frac{n}{r}$, and thus our aim is to show that
$\frac{n}{r} < (pd)^{\frac{1}{p-1}} + 1$.

By the $(p,p)$ property, $H$ contains $\binom{n}{p}$ subsets of $p$ elements, each of which intersect at a vertex.
Since each subgraph in $H$ contains at most $d$ connected components, the multiset $H'$ of all subtrees that appear as a connected component in an element of $H$ is of size at most $nd$, and it contains at least $\binom{n}{p}$ intersecting subsets of $p$ elements each.

Applying Lemma \ref{lemma} to $H'$, we conclude that there exists a vertex in $G$ that lies in at least $$\Big((p-1)!~\frac{\binom{n}{p}}{nd}\Big)^{\frac{1}{p-1}}+1 > \frac{n-p+1}{(dp)^{\frac{1}{p-1}}}
$$ members of $H$, implying
$$\frac{n-p+1}{(dp)^{\frac{1}{p-1}}} \le r.$$
Now, as in the proof of Theorem \ref{dpp}, it follows that $$\frac{n}{r}< (dp)^{\frac{1}{p-1}} + 1. $$ \end{proof}

Combining Theorems \ref{mainpp} and \ref{alon}, the proof of Theorem \ref{treedpp}
follows.
Similarly, the proof of Theorem \ref{treedpq} follows by combining  Theorem \ref{alon} with the following: 

\begin{theorem}\label{mainpq*}
Let $G$ be a tree, and let $H=H(G)$ be a hypergraph of $d$-trees satisfying the $(p,q)$ property for some fixed integer constants $p\ge q>1$, then $$\tau^*(H)\le \max\Big\{\frac{2^{\frac{1}{q-1}}(ep)^\frac{q}{q-1}}{q}\cdot d^{\frac{1}{q-1}} + 1,~ 2p^2\Big\}.$$
\end{theorem}

The proof of Theorem \ref{mainpq*} is very similar to that of Theorem \ref{dpq*} (with Lemma \ref{lemma} playing the role of Lemma \ref{firstlemma} there), and therefore is omitted.    
We conclude this section by remarking that the constants in Theorems \ref{dpq} and \ref{treedpq} can be improved by taking more careful estimations, but we avoid from doing so for the sake of simplicity.

\section{Graphs of bounded tree-width}\label{sec:dtw}

A tree decomposition of a graph $G=(V,E)$ is a tree $T$ with vertices $X_1,\dots,X_n$, where each $X_i$ is a subset of $V$, satisfying the following properties:
\begin{itemize}
\item[(i)] $\bigcup_{i=1}^n X_i = V$,
\item[(ii)] If $v\in X_i \cap X_j$, then $v\in X_k$ for all vertices $X_k$ in the path connecting $X_i$ and $X_j$ in $T$.
\item[(iii)] For every edge $uv \in E$, there exists $1\le i \le n$ so that $\{u,v\}\subset X_i$.
\end{itemize}
The width of a tree decomposition is $\max_i\{|X_i|\}-1$. The tree-width of a graph $G$ is the minimum width among all possible tree decompositions of $G$. 
Families of graphs with bounded tree-width include the cactus graphs, pseudoforests, series-parallel graphs, outerplanar graphs, Halin graphs and more.
\medskip

{\em Proof of Theorem \ref{tw}.}
Let $T$ be a tree decomposition of $G$ with vertex set $V(T)=\{X_1,\dots,X_n\}$, where $X_i\subset V$ and $|X_i| \le k+1$ for all $1\le i\le n$. For each subgraph $h\in H$ let $h'$ be the subgraph of $T$ induced on all vertices $X_i$ for which $X_i$ contains a vertex of $h$. Let $H' = \{h'\mid h\in H\}$. Then $H'$ satisfies the $(p,q)$ property and each member of $H'$ has at most $d$ connected components. By Theorem \ref{treedpq} there exists a set $C \subset V(T)$ of cardinality at most $$\max\Big\{\frac{2^{\frac{1}{q-1}}(ep)^\frac{q}{q-1}}{q}\cdot d^{\frac{q}{q-1}} + d,~ 2p^2d\Big\}$$
 that forms a cover of $H'$. Then the set $\bigcup C$ is a cover of $H$, and its cardinality is at most $$(k+1)\cdot\max\Big\{\frac{2^{\frac{1}{q-1}}(ep)^\frac{q}{q-1}}{q}\cdot d^{\frac{q}{q-1}} + d,~ 2p^2d\Big\}.$$

\section*{Acknowledgment} The author is grateful to the members of the LOG(M) project at the University of Michigan, and especially to Yiwang Chen, for helpful discussions regarding the proof of Theorem   \ref{dpps}.


\end{document}